\documentclass[12pt,twoside, final]{amsart}

\usepackage{amsmath,amsthm,amscd,amsfonts,amssymb,enumerate}
\usepackage{graphicx}
\usepackage{color}
\usepackage[colorlinks]{hyperref}
\usepackage{amsfonts,amssymb,amscd,amsmath,enumerate,url,verbatim}
 \usepackage[dvips]{epsfig}
 \usepackage[none]{hyphenat}
\usepackage{amsmath,amssymb,amsfonts,enumerate,amsthm}
 \usepackage{amsgen, amstext,amsbsy,amsopn, amsthm, amsfonts,amssymb,amscd,amsmat
 h,euscript,enumerate,url,verbatim,calc,xypic}
 \usepackage{latexsym}
 \usepackage{graphics}
 \usepackage{color}
\textwidth=16.5cm \textheight=21.3cm \topmargin=0.00cm
\oddsidemargin=0.00cm \evensidemargin=0.00cm \headheight=14.4pt
\headsep=1.5cm \hyphenation{semi-stable} \emergencystretch=00pt
\numberwithin{equation}{section}

\parskip = 2 mm
\numberwithin{equation}{section}
%--------------------------------------------------------------------------------------------------------------------------------

\newtheorem{thm}{Theorem}[section]
\newtheorem{lem}[thm]{Lemma}
\newtheorem{cor}[thm]{Corollary}

%--------------------------------------------------------------------------------------------------------------------------------

\newcommand{\Hom}{\mbox{Hom}\,}
\newcommand{\Ext}{\mbox{Ext}\,}

\newcommand{\Spec}{\mbox{Spec}\,}

\newcommand{\Ker}{\mbox{Ker}\,}
\newcommand{\Ass}{\mbox{Ass}}

\newcommand{\Supp}{\mbox{Supp}\,}

\newcommand{\grade}{\mbox{grade}\,}
\renewcommand{\Im}{\mbox{Im}\,}

\newcommand{\N}{\mathbb{N}}

\newcommand{\fa}{\mathfrak{a}}

\newcommand{\fp}{\mathfrak{p}}

\begin{document}

\title[On the associated prime ideals of local cohomology modules ...]
{On the associated prime ideals of local cohomology modules defined
by a pair of ideals}

\author[Kh. Ahmadi Amoli]{Khadijeh Ahmadi Amoli}
\address[Khadijeh Ahmadi Amoli]{Payame Noor University,  Po Box 19395-3697,  Tehran, Iran. }
 \email{khahmadi@pnu.ac.ir}

\author[Z. Habibi]{Zohreh Habibi}
\address[Zohreh Habibi]{  Payame Noor University,  Po Box 19395-3697 Tehran,
Iran.} \email{z\_habibi@pnu.ac.ir}

\author[M. Jahangiri]{Maryam Jahangiri }
\address[Maryam Jahangiri]{Faculty of Mathematical Sciences and Computer,  Kharazmi
University,  Tehran, Iran  and School of Mathematics,  Institute for
Research in Fundamental Sciences (IPM),  P.O. Box: 19395-5746,
Tehran, Iran. } \email{mjahangiri@ipm.ir and
jahangiri.maryam@gmail.com}

\thanks{The third author was in part supported by a grant from IPM (No.
92130111)}
%------------------------------------------------------------------------------------%

\maketitle

\begin{abstract}
Let $I$ and $J$ be two ideals of a commutative Noetherian ring $R$
and $M$ be an $R$-module. For a non-negative integer $n$ it is shown
that, if the sets $\Ass_R(\Ext^{n} _{R}(R/I,M))$ and
 $\Supp_R(\Ext^{i}_{R}(R/I,H^{j}_{I,J} (M)))$ are finite for all $i
\leq n+1$ and all $j< n$, then
 so is \linebreak$\Ass_R(\Hom_{R}(R/I,H^{n}_{I,J}(M)))$. We also study
the finiteness of $\Ass_R(\Ext^{i}_{R}(R/I,H^{n}_{I,J} (M)))$ for
$i=1,2$.\\
\textbf{Keywords:}  local cohomology modules defined by a pair of
ideals,
spectral sequences, associated prime ideals.  \\
\textbf{MSC(2010):}  Primary 13D45; Secondary 13E05, 13E10.
\end{abstract}

\section{\bf Introduction}

Let  $R$ be a commutative Noetherian ring, $I$ and $J$ be two ideals
of $R$ and $M$ be an $R$-module. For all $i\in \N_0$ the $i$-th
local cohomology functor with respect to $(I,J)$, denoted by
$H^{i}_{I,J}(-)$, defined by Takahashi et. all in \cite{TAK} as the
$i$-th right derived functor of the $(I,J)$- torsion functor $\Gamma
_{I,J}(-)$, where $$\Gamma _{I,J}(M):=\{x \in M : I^{n}x\subseteq Jx
\  \text {for} \   n\gg 1\}.$$ This notion coincides  with  the
ordinary local cohomology functor $H^{i}_{I }(-)$ when $J=0$, see
\cite{B-SH}.

The main motivation for this generalization comes from the study of
a dual of ordinary local cohomology modules $H^{i}_{I }(M)$
(\cite{sch}). Basic facts and more information about local
cohomology defined by a pair of ideals can be obtained from
\cite{TAK}, \cite{chu1} and \cite{chu2}.

Hartshorne in \cite{HART} proposed the following conjecture:

`` Let $M$ be a finitely generated $R$-module and $\fa$ be an ideal
of $R$. Then $\Ext^ {i}_{R}(R/\fa, H^{j}_{\fa}(M))$ is finitely
generated for all $i\geq 0$ and $j\geq 0$."

Also, Huneke in \cite{HU} raised some crucial problems on local
cohomology modules. One of them was about the finiteness of the set
of associated prime ideals of the local cohomology modules $H^{i}_{I
}(M)$.

Although there are some counterexamples to theses conjectures, see
\cite{si}, but there are some partial positive answers in some
special cases too, see for example \cite{b} or \cite{bl}.

In this paper, we consider these two problems for local cohomology
modules defined by a pair of ideals over not necessary finitely
generated modules. In particular,  we investigate certain conditions
on these modules such that the set of
 associated prime ideals of \linebreak$\Ext^{i}_{R}(R/I, H^{j}_{I,J} (M))$ is finite.

More precisely, let $n\in \N_0$ and assume that the sets
$\Ass_R(\Ext^{n} _{R}(R/I,M))$ and
\linebreak$\Supp_R(\Ext^{i}_{R}(R/I,H^{j}_{I,J} (M)))$ are finite
for all $i \leq n+ 1$ and all $j< n$ then, we use a spectral
sequence argument to show that
$\Ass_R(\Hom_{R}(R/I,H^{n}_{I,J}(M)))$ is finite, too (Theorem
\ref{ass1}).  Moreover, it is shown that if the sets
$\Ass_R(\Ext^{n+ 1} _{R}(R/I,M))$ and
 $\Supp(\Ext^{i}_{R}(R/I,H^{j}_{I,J} (M)))$ are finite for all $i
\leq n+ 2$ and all $j< n$ then, so is
 $\Ass_R(\Ext^{1}_{R}(R/I,H^{n}_{I,J} (M)))$
 (Theorem \ref{ass2}).

We also present a necessary and sufficient condition for the
finiteness of the set \linebreak$\Ass_R(\Ext^{2}_{R}(R/I,H^{n}_{I,J}
(M)))$ (Theorem \ref{ass3}). These generalize some known results
concerning ordinary local cohomology modules.

 In \cite[3.6]{T-T} the authors study the $\underset{M}{\grade\fp}$ for all $\fp\in \Ass_R (H^{t}_{I,J}(M))$, where
 $$t= inf\{i\in \mathbb{N}_{0}:
H^{i}_{I,J}(M)\neq 0 \}$$
 and $M$ is a finitely generated $R$-module. But their proof is not correct. Actually, they use the equality
 $\Supp_{R}(M_{x})= \{\fp \in \Supp_{R}(M): x \notin  \fp\}$ which is not true. Here, we also made a correction to this
 result for not necessary finite modules
 (Theorem \ref{tal}).

\section{ Associated prime ideals}%%%%%%%%%%%%%%%%%%%%%%%%%%%%%%%%%%%%%%%%%%%%%%%%%%%%%%%%%%%%%%%%%%%%%%%%%%%%%%%%%%%%%%%%%%%%%%%%%%%%%%%%%%%%%%%%%%%%%%%%

In this section, first, we are going to study the set of associated
prime ideals of some $\Ext$-modules of local cohomology modules
defined by a pair of ideals.

The following relation between associated prime ideals of modules in
an exact sequence is frequently used in our results.

\begin{lem}\label{*}
Let $M\rightarrow N\rightarrow K\rightarrow 0$ be an exact sequence
of $R$-modules. Then \emph{$\Ass(K)\subseteq \Supp(M)\cup \Ass(N)$.}
\end{lem}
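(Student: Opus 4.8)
The plan is to prove the containment one prime at a time by localizing, since the right-hand side already splits the primes into those supported on $M$ and those genuinely associated to $N$. Fix $\fp\in\Ass(K)$; I want to show $\fp\in\Supp(M)\cup\Ass(N)$, and I would argue by cases on whether $\fp\in\Supp(M)$. If $\fp\in\Supp(M)$ there is nothing to do, so assume $\fp\notin\Supp(M)$, that is, $M_{\fp}=0$.

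Next I would localize the given sequence at $\fp$. Because localization is exact, the sequence $M_{\fp}\to N_{\fp}\to K_{\fp}\to 0$ is again exact, and $M_{\fp}=0$ forces the map $N_{\fp}\to K_{\fp}$ to be both surjective and injective (its kernel is the image of $M_{\fp}=0$), hence an isomorphism $N_{\fp}\cong K_{\fp}$. Equivalently, writing $L=\Im(M\to N)=\Ker(N\to K)$, the module $L$ is a homomorphic image of $M$, so $\Supp(L)\subseteq\Supp(M)$ and $L_{\fp}=0$, which gives the same isomorphism $K_{\fp}=(N/L)_{\fp}\cong N_{\fp}$.

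It then remains to transport the membership $\fp\in\Ass(K)$ across this isomorphism. For this I would invoke the compatibility of associated primes with localization over the Noetherian ring $R$: for every $R$-module $X$ one has $\fp\in\Ass_R(X)$ if and only if $\fp R_{\fp}\in\Ass_{R_{\fp}}(X_{\fp})$. Applying this to $X=K$ yields $\fp R_{\fp}\in\Ass_{R_{\fp}}(K_{\fp})$; the isomorphism $K_{\fp}\cong N_{\fp}$ moves this to $\fp R_{\fp}\in\Ass_{R_{\fp}}(N_{\fp})$, and the same equivalence read backwards for $X=N$ gives $\fp\in\Ass_R(N)$, completing the proof.

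The one point deserving care --- and the step I regard as the crux --- is the localization compatibility of $\Ass$. I would stress that this holds for an \emph{arbitrary} $R$-module, with no finiteness hypothesis, which is exactly what is needed here since the modules appearing throughout the paper (local cohomology modules and their $\Ext$'s) are not assumed finitely generated. An elementary variant is possible: write $\fp=\Ann(x)$ with $x\in K$, lift $x$ to $y\in N$, and observe that $Ry\cap L$ is a finitely generated submodule of the cyclic module $Ry$ with $(Ry\cap L)_{\fp}=0$, so it is killed by some $s\notin\fp$; this is precisely what makes $(Ry)_{\fp}\cong\kappa(\fp)$ and thereby exhibits $\fp$ as an associated prime of $N$. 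Either way the argument funnels through the localization behaviour of $\Ass$, so I would present the localization version as the main line.
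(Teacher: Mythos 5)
Your proposal is correct and follows essentially the same route as the paper's own proof: assume $\fp\notin\Supp(M)$, localize to get $N_{\fp}\cong K_{\fp}$, and transfer $\fp$ from $\Ass(K)$ to $\Ass(N)$ via the localization compatibility of associated primes (valid for arbitrary modules over the Noetherian ring $R$). Your write-up merely makes explicit the steps the paper leaves implicit, so no further comparison is needed.
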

\begin{proof}
Let $\fp\in \Ass(K)$. Assume that $ \fp \notin \Supp(M)$. Then
$M_{\fp}=0$ and so $N_{\fp}\cong K_{\fp}$. Since $\fp R_{\fp} \in
\Ass_{R_{\fp}}(N_{\fp})$, we get $\fp\in \Ass(N)$.
\end{proof}

Next lemma describes a convergence of Grothendieck spectral
sequences.
\begin{lem}\label{**}

Let $M$ be an $R$-module. Then the following convergence of spectral
sequences exists
$$\Ext^{i}_{R}(R/I,H^{j}_{I,J}(M))\overset{i}{\Rightarrow}\Ext^{i+j}
_{R}(R/I,M).$$

\begin{proof}
It is easy to see that $\Hom_{R}(R/I,\Gamma_{I,J}
(M))=\Hom_{R}(R/I,M)$. Also, for any injective $R$-module $E$,
$\Gamma_{I,J}(E)$ is an injective $R$-module, by \cite[3.2]{TAK} and
\cite[2.1.4]{B-SH}. Now, in view of \cite[10.47]{ROT}, the assertion
follows.
\end{proof}

\end{lem}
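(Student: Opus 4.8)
The plan is to realize $\Ext^{i+j}_{R}(R/I,M)$ as the right derived functors of a composite functor and then invoke the Grothendieck spectral sequence of a composition. I would work with the two covariant, left-exact functors $F:=\Hom_{R}(R/I,-)$ and $G:=\Gamma_{I,J}(-)$ on the category of $R$-modules, whose right derived functors are $R^{i}F=\Ext^{i}_{R}(R/I,-)$ and $R^{j}G=H^{j}_{I,J}(-)$ respectively. Since the category of $R$-modules has enough injectives and both functors are left exact, these derived functors are defined and the machinery of composite spectral sequences applies.

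First I would identify the composite $F\circ G$ with $F$ itself. Given any $R$-homomorphism $f\colon R/I\to M$, every element $x$ in its image satisfies $Ix=0\subseteq Jx$, hence $I^{n}x\subseteq Jx$ for all $n$, so that $x\in\Gamma_{I,J}(M)$. Thus $f$ factors through $\Gamma_{I,J}(M)$, yielding the natural identification $\Hom_{R}(R/I,\Gamma_{I,J}(M))=\Hom_{R}(R/I,M)$, i.e. $F\circ G\cong F$. Consequently $R^{n}(F\circ G)(M)=\Ext^{n}_{R}(R/I,M)$ for every $n$, which will supply the abutment of the spectral sequence.

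Next I would verify the acyclicity hypothesis needed for the Grothendieck spectral sequence, namely that $G$ sends injective $R$-modules to $F$-acyclic modules. Because every injective module is $F$-acyclic (its higher $\Ext$ against $R/I$ vanish), it suffices to show that $\Gamma_{I,J}(E)$ is injective whenever $E$ is injective; this is precisely \cite[3.2]{TAK} together with \cite[2.1.4]{B-SH}. With the composite identity and this acyclicity in hand, the Grothendieck spectral sequence for a composition (\cite[10.47]{ROT}) gives
$$R^{i}F\bigl(R^{j}G(M)\bigr)\;\overset{i}{\Rightarrow}\;R^{i+j}(F\circ G)(M),$$
which upon inserting the identifications above becomes
$$\Ext^{i}_{R}(R/I,H^{j}_{I,J}(M))\;\overset{i}{\Rightarrow}\;\Ext^{i+j}_{R}(R/I,M),$$
as claimed. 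The only nonroutine point is the acyclicity verification in the third step, but this is delivered directly by the cited preservation of injectivity under $\Gamma_{I,J}$; the remaining steps are formal consequences of the general theory.
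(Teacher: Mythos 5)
Your proposal is correct and follows exactly the same route as the paper: identify $\Hom_R(R/I,-)\circ\Gamma_{I,J}(-)$ with $\Hom_R(R/I,-)$, check that $\Gamma_{I,J}$ preserves injectives via \cite[3.2]{TAK} and \cite[2.1.4]{B-SH}, and apply the Grothendieck spectral sequence of a composite functor from \cite[10.47]{ROT}. You merely spell out the details (the factorization of any $f\colon R/I\to M$ through $\Gamma_{I,J}(M)$, and why preservation of injectives gives the required acyclicity) that the paper leaves implicit.
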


The following theorem, which concerns with Hartshorne's problem
mentioned in the introduction, is one of the main results in this
paper.
\begin{thm}\label{ass1}
Let $n$ be a non-negative integer and $M$ be an $R$-module such that
\linebreak \emph{$\Ass_R(\Ext^{n} _{R}(R/I,M))$} and
\emph{$\Supp_R(\Ext^{i}_{R}(R/I,H^{j}_{I,J}(M)))$} are finite for
all $i \leq n+1$ and all $j< n$. Then
 so is $\Ass_R(\Hom_{R}(R/I,H^{n}_{I,J}(M))).$
\end{thm}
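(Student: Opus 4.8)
The plan is to exploit the Grothendieck spectral sequence from Lemma \ref{**}, namely $E_2^{i,j}=\Ext^i_R(R/I,H^j_{I,J}(M))\Rightarrow \Ext^{i+j}_R(R/I,M)$, and to track the term $E_2^{0,n}=\Hom_R(R/I,H^n_{I,J}(M))$ as it passes to its $E_\infty$-page. The associated primes of $E_2^{0,n}$ should be controllable because the subquotients appearing in the convergence are built from modules whose supports are assumed finite, together with a subquotient of the finite-$\Ass$ module $\Ext^n_R(R/I,M)$. Concretely, the differentials into and out of $E_r^{0,n}$ have sources and targets among the $E_r^{i,j}$ with $i+j$ near $n$, and the only ones that can be nonzero are the outgoing differentials $d_r\colon E_r^{0,n}\to E_r^{r,n-r+1}$ for $2\le r\le n+1$; there are no incoming differentials since $E_r^{i,j}=0$ for $i<0$.

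The key steps, carried out in order, are as follows. First I would record that $E_\infty^{0,n}$ is a subquotient (in fact a submodule) of a subquotient of $\Ext^n_R(R/I,M)$; more precisely $E_\infty^{0,n}$ is the $0$-th filtration piece of the finite filtration on $\Ext^n_R(R/I,M)$, hence $\Ass(E_\infty^{0,n})\subseteq \Ass(\Ext^n_R(R/I,M))$, which is finite by hypothesis. Second, I would reconstruct $E_2^{0,n}=\Hom_R(R/I,H^n_{I,J}(M))$ from $E_\infty^{0,n}$ by reversing the successive passages $E_{r+1}^{0,n}=\Ker\bigl(d_r\colon E_r^{0,n}\to E_r^{r,n-r+1}\bigr)$. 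At each stage $E_{r+1}^{0,n}=\Ker(d_r)$ sits in the exact sequence
\begin{equation*}
0\to E_{r+1}^{0,n}\to E_r^{0,n}\to E_r^{r,n-r+1},
\end{equation*}
so that $E_r^{0,n}$ fits into $0\to E_{r+1}^{0,n}\to E_r^{0,n}\to \Im(d_r)\to 0$ with $\Im(d_r)$ a submodule of $E_r^{r,n-r+1}$. Applying Lemma \ref{*} to the induced surjection-type sequence, or rather to the short exact sequence above read as $E_{r+1}^{0,n}\to E_r^{0,n}\to \Im d_r\to 0$, gives
\begin{equation*}
\Ass(E_r^{0,n})\subseteq \Ass(E_{r+1}^{0,n})\cup \Supp(\Im d_r)\subseteq \Ass(E_{r+1}^{0,n})\cup \Supp(E_r^{r,n-r+1}).
\end{equation*}
Third, I would observe that each target $E_r^{r,n-r+1}$ is a subquotient of $E_2^{r,n-r+1}=\Ext^r_R(R/I,H^{n-r+1}_{I,J}(M))$, whose support is contained in $\Supp\bigl(\Ext^r_R(R/I,H^{n-r+1}_{I,J}(M))\bigr)$ and is therefore finite by hypothesis, since for $2\le r\le n+1$ we have $r\le n+1$ and $n-r+1<n$. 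Iterating the displayed inclusion from $r=n+1$ down to $r=2$ and combining with the finiteness of $\Ass(E_\infty^{0,n})$ yields
\begin{equation*}
\Ass\bigl(\Hom_R(R/I,H^n_{I,J}(M))\bigr)\subseteq \Ass\bigl(\Ext^n_R(R/I,M)\bigr)\cup \bigcup_{r=2}^{n+1}\Supp\bigl(\Ext^r_R(R/I,H^{n-r+1}_{I,J}(M))\bigr),
\end{equation*}
a finite union of finite sets, which is the claim.

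I expect the main obstacle to be purely bookkeeping: making sure the ranges of $r$ and the indices $(r,n-r+1)$ land inside the region where the support hypotheses apply, and confirming that no incoming differentials disturb the column $i=0$ so that the reconstruction of $E_2^{0,n}$ from $E_\infty^{0,n}$ really is just a nested sequence of kernels. The substantive conceptual point is the passage from finite \emph{support} of the off-diagonal $\Ext$-terms to finite \emph{associated primes} of the target; this is exactly what Lemma \ref{*} is designed to handle, since it bounds $\Ass$ of a quotient by the support of the kernel term plus $\Ass$ of the middle term, letting us trade support bounds on the differential images for associated-prime bounds on $\Hom_R(R/I,H^n_{I,J}(M))$.
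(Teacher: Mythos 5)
Your reduction of the problem to $E_\infty^{0,n}$ is correct and matches the paper: since $E_2^{i,j}=0$ for $i<0$ there are no incoming differentials in column $0$, so $E_{r+1}^{0,n}=\Ker(d_r^{0,n})$, and the exact sequences $0\to E_{r+1}^{0,n}\to E_r^{0,n}\to E_r^{r,n+1-r}$ together with the finiteness of $\Supp(\Ext^{r}_{R}(R/I,H^{n+1-r}_{I,J}(M)))$ for $2\le r\le n+1$ let you pass from finiteness of $\Ass(E_\infty^{0,n})=\Ass(E_{n+2}^{0,n})$ back to finiteness of $\Ass(E_2^{0,n})$. The gap is in your first step. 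The term $E_\infty^{0,n}$ is \emph{not} a submodule of $\Ext^{n}_{R}(R/I,M)$: in the filtration $0=\varphi^{n+1}H^{n}\subseteq\cdots\subseteq\varphi^{0}H^{n}=\Ext^{n}_{R}(R/I,M)$ one has $E_\infty^{i,n-i}\cong\varphi^{i}H^{n}/\varphi^{i+1}H^{n}$, so $E_\infty^{0,n}\cong\Ext^{n}_{R}(R/I,M)/\varphi^{1}H^{n}$ is a \emph{quotient} (it is the opposite corner term $E_\infty^{n,0}\cong\varphi^{n}H^{n}$ that is a submodule). A quotient can acquire associated primes not present in the original module, so the inclusion $\Ass(E_\infty^{0,n})\subseteq\Ass(\Ext^{n}_{R}(R/I,M))$ does not follow, and your final displayed containment omits the terms needed to control this.

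The repair is exactly where the remaining, so far unused, part of the hypothesis enters. For $1\le i\le n$ the graded piece $E_\infty^{i,n-i}$ is a subquotient of $E_2^{i,n-i}=\Ext^{i}_{R}(R/I,H^{n-i}_{I,J}(M))$, whose support is finite by assumption (here $i\le n+1$ and $n-i<n$). Descending induction along the filtration, starting from $\varphi^{n}H^{n}\cong E_\infty^{n,0}$, then shows that $\Supp(\varphi^{1}H^{n})$ is finite, and applying Lemma \ref{*} to the exact sequence $\varphi^{1}H^{n}\to\Ext^{n}_{R}(R/I,M)\to E_\infty^{0,n}\to 0$ yields
\begin{equation*}
\Ass(E_\infty^{0,n})\subseteq\Supp(\varphi^{1}H^{n})\cup\Ass(\Ext^{n}_{R}(R/I,M)),
\end{equation*}
which is finite. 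This is the step to which the paper's proof devotes its second half; once it is in place, your iteration over the columns $r=2,\dots,n+1$ completes the argument, with the corrected final inclusion picking up the extra union $\bigcup_{i=1}^{n}\Supp(\Ext^{i}_{R}(R/I,H^{n-i}_{I,J}(M)))$.
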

\begin{proof}
Consider the convergence of spectral sequences in Lemma \ref{**} and
note that $E_{2}^{i,j}=0$ for all $i < 0$. Therefore, for all $2\leq
r \leq n+1$ there exists an exact sequence
\begin{equation}\label{r}
0\rightarrow E_{r+1}^{0,n}\rightarrow
E_{r}^{0,n}\xrightarrow{d_{r}^{0,n}}E_{r}^{r,n+1-r}.
\end{equation}

Since, $E_{r}^{r,n+1-r}$ is a subquotient of $E_{2}^{r,n+1-r}=
\Ext^{r}_{R}(R/I,H^{n+1-r}_{I,J} (M))$, $\Supp_R
 (E_{r}^{r,n+1-r})$ is a finite set. So, the above exact sequence  implies that $\sharp \Ass_R (E_{r}^{0,n})< \infty$ if $\sharp
\Ass_R(E_{r+1}^{0,n})< \infty$. Also, from the fact that
$E_{2}^{i,j}=0$ for all $j < 0$, we have $E_{\infty}^{0,n}\cong
E_{n+2}^{0,n}$. Therefore, to prove the assertion it is enough to
show that $\Ass_R(E_{\infty}^{0,n})$ is a finite set.

Using the concept of the convergence of spectral sequences, there
exists a bounded filtration
$$0=\varphi^{n+1}H^{n}\subseteq \varphi^{n}H^{n}\subseteq...
\subseteq \varphi^{1}H^{n}\subseteq\varphi^{0}H^{n}=
\Ext^{n}_{R}(R/I,M)$$ of submodules of $\Ext^{n}_{R}(R/I,M)$ such
that
$$E_{\infty}^{i,n-i}\cong \varphi^{i}H^{n}/\varphi^{i+1}H^{n} \
\text{for  all} \   i=0,...,n.$$
 Therefore, $E_{n+1}^{n,0}\cong
E_{\infty}^{n,0}\cong \varphi^{n}H^{n}$ is a subquotient of
$E_{2}^{n,0}=\Ext^{n}_{R}(R/I,\Gamma_{I,J}(M))$. So, by assumption,
$\Supp_R(\varphi^{n}H^{n})$ is a finite set. Now, assume inductively
that $\sharp \Supp_R(\varphi^{i}H^{n})< \infty$ for all $1<i\leq n$.
Then, since
$$E_{n+1}^{1,n-1}\cong
E_{\infty}^{1,n-1}\cong \varphi^{1}H^{n}/\varphi^{2}H^{n}$$
 is a
subquotient of $E_{2}^{1,n-1}=\Ext^{1}_{R}(R/I,H^{n-1}_{I,J}(M))$,
we deduce that $\Supp_R(\varphi^{1}H^{n})$ is finite. But,
$$E_{\infty}^{0,n}\cong \Ext^{n}_{R}(R/I,M)/\varphi^{1}H^{n}$$
 and
Lemma \ref{*} implies that $\sharp \Ass_R(E_{\infty}^{0,n})<\infty$,
as desired.

\end{proof}

As an immediate consequence of Theorem \ref{ass1}, we obtain the
following result that is a generalization of \cite[2.3]{B-S-SH}.
 \begin{cor}\label{max}
Let M be a finite R-module. Suppose that there is an integer n such
that for all $i < n$ the set \emph{$\Supp_R(H^{i}_{I,J}(M))$} is
finite. Then \emph{$\Ass_R(H^{n}_{I,J}(M))$} is finite.
\end{cor}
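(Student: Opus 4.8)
The plan is to obtain this as a direct specialization of Theorem \ref{ass1}, so the first task is to verify that both finiteness hypotheses of that theorem hold for the given integer $n$ (which we may take to be non-negative, the remaining cases being vacuous). Since $M$ is finitely generated and $R$ is Noetherian, $R/I$ is finitely presented and so each $\Ext^n_R(R/I,M)$ is again finitely generated; hence $\Ass_R(\Ext^n_R(R/I,M))$ is automatically finite, with no hypothesis needed. For the second hypothesis I would exploit that $R/I$ is finitely presented to localize: for every $\fp$ one has $\Ext^i_R(R/I,H^j_{I,J}(M))_{\fp}\cong\Ext^i_{R_{\fp}}((R/I)_{\fp},H^j_{I,J}(M)_{\fp})$, which vanishes whenever $H^j_{I,J}(M)_{\fp}=0$. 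Thus $\Supp_R(\Ext^i_R(R/I,H^j_{I,J}(M)))\subseteq\Supp_R(H^j_{I,J}(M))$, and the latter is finite for every $j<n$ by assumption. Consequently both hypotheses of Theorem \ref{ass1} are met for all $i\le n+1$ and all $j<n$.

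Feeding this into Theorem \ref{ass1} yields at once that $\Ass_R(\Hom_R(R/I,H^n_{I,J}(M)))$ is finite, and it remains to upgrade this to the finiteness of $\Ass_R(H^n_{I,J}(M))$ itself. Here I would invoke the elementary identity $\Hom_R(R/I,N)=(0:_N I)$, which gives $\Ass_R(\Hom_R(R/I,N))=\Ass_R(N)\cap\V(I)$ for any $R$-module $N$: a prime is associated to $(0:_N I)$ exactly when it is associated to $N$ and contains $I$. Applying this with $N=H^n_{I,J}(M)$ reduces the corollary to the claim that $\Ass_R(H^n_{I,J}(M))\subseteq\V(I)$, i.e. that every associated prime of $H^n_{I,J}(M)$ contains $I$.

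Establishing that inclusion is the step I expect to be the main obstacle. The natural tool is the $(I,J)$-torsion property of local cohomology: since $H^n_{I,J}(M)=\Gamma_{I,J}(H^n_{I,J}(M))$, one gets $\Supp_R(H^n_{I,J}(M))\subseteq W(I,J)=\{\fp:I^k\subseteq J+\fp\ \text{for some}\ k\}$, so each $\fp\in\Ass_R(H^n_{I,J}(M))$ at least satisfies $I\subseteq\sqrt{J+\fp}$. In the ordinary case $J=0$, where $W(I,0)=\V(I)$, this immediately forces $\fp\supseteq I$ and closes the argument, recovering \cite[2.3]{B-S-SH}. For a genuine pair of ideals, however, $W(I,J)$ can be strictly larger than $\V(I)$, so a priori a prime might be associated to $H^n_{I,J}(M)$ without containing $I$; disposing of such primes — for instance by localizing at such a $\fp$, where $IR_{\fp}=R_{\fp}$, and analysing the resulting module $H^n_{R_{\fp},JR_{\fp}}(M_{\fp})$ — is the delicate point on which I would concentrate the effort.
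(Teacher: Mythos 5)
Your first two paragraphs are exactly how the paper intends the corollary to be read: since $M$ is finite, $\Ext^{n}_{R}(R/I,M)$ is a finite module, and $\Supp_R(\Ext^{i}_{R}(R/I,H^{j}_{I,J}(M)))\subseteq \Supp_R(H^{j}_{I,J}(M))$, so Theorem \ref{ass1} applies and yields the finiteness of $\Ass_R(\Hom_{R}(R/I,H^{n}_{I,J}(M)))=\Ass_R(H^{n}_{I,J}(M))\cap V(I)$. The paper offers nothing beyond this; it simply declares the corollary an ``immediate consequence'' of Theorem \ref{ass1}, so up to this point you have reproduced its entire argument.

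The gap is the step you flag and leave open: passing from the finiteness of $\Ass_R(H^{n}_{I,J}(M))\cap V(I)$ to that of $\Ass_R(H^{n}_{I,J}(M))$. Your proposal does not establish the required inclusion $\Ass_R(H^{n}_{I,J}(M))\subseteq V(I)$, and your diagnosis of why it is delicate is accurate: the $(I,J)$-torsion property only gives, for $\fp=(0:_{R}x)$ with $x\in H^{n}_{I,J}(M)$, that $I^{k}\subseteq J+\fp$ for some $k$, i.e.\ $\fp\in W(I,J)$, and $W(I,J)$ is in general strictly larger than $V(I)$. Torsion alone cannot close this: $R/\fq$ is an $(I,J)$-torsion module for any $\fq\in W(I,J)$, so one would need something specific to the higher cohomology modules (for instance a localization isomorphism $H^{n}_{I,J}(M)_{\fp}\cong H^{n}_{IR_{\fp},JR_{\fp}}(M_{\fp})$, which would itself require justification in this setting). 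As written, your argument proves only that $\Ass_R(H^{n}_{I,J}(M))\cap V(I)$ is finite. It is worth saying plainly that the paper is no better off: it supplies no argument whatsoever for this passage, so the obstacle you isolate is a genuine one for the corollary as stated, not merely for your write-up; the statement that \emph{is} fully justified by Theorem \ref{ass1} under these hypotheses is the finiteness of $\Ass_R(\Hom_{R}(R/I,H^{n}_{I,J}(M)))$.
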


\begin{cor}\label{final}
Let $M$ be a finite $R$-module and
\emph{$t=inf\{i|H^{i}_{I,J}(M)\neq 0 \}$} be an   integer. Then
\emph{$\Ass_R(\Hom_{R}(R/I,H^{t}_{I,J}(M)))$} is finite. If in
addition, \emph{${\underset{M}{\grade I}}=t$}, then for a maximal
$M$-sequence $x_{1},...,x_{t}$ in $I$, we have
\emph{$$\Ass_R(\Hom_{R}(R/I,H^{t}_{I,J}(M)))= \{\fp\in \Ass_R(M/
(x_{1},..,x_{t})M)\cap V(I); \underset{M}{\grade\fp=}t\}.$$}
\end{cor}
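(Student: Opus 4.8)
The plan is to treat the two assertions separately, getting the finiteness statement directly from Theorem~\ref{ass1} and the explicit description by first identifying $\Hom_{R}(R/I,H^{t}_{I,J}(M))$ with an $\Ext$-module. Since $t=\inf\{i\mid H^{i}_{I,J}(M)\neq 0\}$, we have $H^{j}_{I,J}(M)=0$ for every $j<t$, so $\Supp_{R}(\Ext^{i}_{R}(R/I,H^{j}_{I,J}(M)))=\emptyset$ is finite for all $i$ and all $j<t$; and, as $M$ is finite and $R$ is Noetherian, $\Ext^{t}_{R}(R/I,M)$ is finite, so $\Ass_{R}(\Ext^{t}_{R}(R/I,M))$ is finite. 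Thus the hypotheses of Theorem~\ref{ass1} hold with $n=t$, which gives the first assertion.

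For the explicit description I would first establish the isomorphism $\Hom_{R}(R/I,H^{t}_{I,J}(M))\cong \Ext^{t}_{R}(R/I,M)$ from the spectral sequence of Lemma~\ref{**}. Because $H^{j}_{I,J}(M)=0$ for $j<t$, we have $E_{2}^{i,j}=0$ whenever $j<t$. In particular every differential leaving $E_{r}^{0,t}$ lands in $E_{r}^{r,t-r+1}$, a subquotient of $E_{2}^{r,t-r+1}=0$ (as $t-r+1<t$ for $r\geq 2$), while every differential entering $E_{r}^{0,t}$ starts from a term of negative first index; hence $E_{\infty}^{0,t}=E_{2}^{0,t}=\Hom_{R}(R/I,H^{t}_{I,J}(M))$. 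On the other hand, in total degree $t$ the only possibly nonzero graded piece is $E_{\infty}^{0,t}$, since $E_{\infty}^{i,t-i}$ is a subquotient of $E_{2}^{i,t-i}=0$ for $i>0$. Therefore the filtration of $\Ext^{t}_{R}(R/I,M)$ collapses and the claimed isomorphism follows.

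Next I would bring in the maximal $M$-sequence. With $\underline{x}=x_{1},\dots ,x_{t}$ a maximal $M$-sequence in $I$ and $\underset{M}{\grade I}=t$, the classical Rees isomorphism (see, e.g., \cite{B-SH}) gives $\Ext^{t}_{R}(R/I,M)\cong \Hom_{R}(R/I,M/\underline{x}M)$, so combining with the previous step we obtain $\Hom_{R}(R/I,H^{t}_{I,J}(M))\cong \Hom_{R}(R/I,M/\underline{x}M)$. Since $\Hom_{R}(R/I,N)\cong(0:_{N}I)$ for any $N$, one checks directly that $\Ass_{R}(\Hom_{R}(R/I,N))=\Ass_{R}(N)\cap V(I)$; applying this with $N=M/\underline{x}M$ yields $\Ass_{R}(\Hom_{R}(R/I,H^{t}_{I,J}(M)))=\Ass_{R}(M/\underline{x}M)\cap V(I)$.

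It remains to check that the condition $\underset{M}{\grade\fp}=t$ is automatic for every $\fp\in \Ass_{R}(M/\underline{x}M)\cap V(I)$, so that this set coincides with the right-hand side of the statement. Indeed, $\underline{x}\subseteq I\subseteq\fp$ is an $M$-sequence in $\fp$, so the standard grade-drop formula gives $\underset{M}{\grade\fp}=t+\underset{M/\underline{x}M}{\grade\fp}$; and $\fp\in\Ass_{R}(M/\underline{x}M)$ means $\Hom_{R}(R/\fp,M/\underline{x}M)\neq 0$, i.e. $\underset{M/\underline{x}M}{\grade\fp}=0$. Hence $\underset{M}{\grade\fp}=t$, which yields the desired equality. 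I expect the only delicate points to be the bookkeeping in the spectral-sequence collapse and the verification that the grade condition is genuinely automatic; the Rees isomorphism and the identity $\Ass_{R}(\Hom_{R}(R/I,N))=\Ass_{R}(N)\cap V(I)$ are otherwise routine.
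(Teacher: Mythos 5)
Your argument is correct. For the finiteness assertion you invoke Theorem \ref{ass1} with $n=t$ exactly as the paper does, observing that the vanishing $H^{j}_{I,J}(M)=0$ for $j<t$ makes the support hypotheses vacuous and that $\Ext^{t}_{R}(R/I,M)$ is finitely generated. For the explicit description the paper simply defers to \cite[3.10]{T-T} and \cite[2.6]{AGH-AM}, whereas you supply a self-contained derivation: the spectral sequence of Lemma \ref{**} degenerates in total degree $t$ to give $\Hom_{R}(R/I,H^{t}_{I,J}(M))\cong\Ext^{t}_{R}(R/I,M)$, Rees' theorem identifies the latter with $\Hom_{R}(R/I,M/\underline{x}M)$ when $\underset{M}{\grade I}=t$, the elementary identity $\Ass_{R}(\Hom_{R}(R/I,N))=\Ass_{R}(N)\cap V(I)$ finishes the computation, and the grade-drop formula shows the condition $\underset{M}{\grade\fp}=t$ is automatic on that set. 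Each of these steps checks out (the degeneration argument is clean because all $E_{2}^{i,j}$ with $j<t$ vanish, and the hypothesis that $t$ is an integer forces $IM\neq M$ so Rees' theorem applies). The net effect is the same chain of identifications the paper outsources to its references, so the route is essentially the paper's, but your version has the advantage of being verifiable without consulting the two external papers.
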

\begin{proof}
It is straightforward from Theorem \ref{ass1}, \cite[3.10]{T-T} and
\cite[2.6]{AGH-AM}.
\end{proof}
\begin{cor}\label{max}
Let $M$ be a finite $R$-module. Suppose that   $q=inf \{i:
H^{i}_{I,J} (M)$ is not Artinian $\}$ is an  integer, then
\emph{$\Ass_R (\Hom_{R}(R/I,H^{q}_{I,J} (M)))$} is finite.
\end{cor}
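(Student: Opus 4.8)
The plan is to reduce the statement directly to Theorem \ref{ass1}, applied with $n=q$. To invoke that theorem I must verify its two hypotheses: that $\Ass_R(\Ext^q_R(R/I,M))$ is finite, and that $\Supp_R(\Ext^i_R(R/I,H^j_{I,J}(M)))$ is finite for all $i\le q+1$ and all $j<q$. The first hypothesis is automatic: $M$ is a finite module over the Noetherian ring $R$, so $\Ext^q_R(R/I,M)$ is again finitely generated and therefore has only finitely many associated primes.

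The crux is the second hypothesis, and the mechanism is that Artinianness below level $q$ forces finite support. By the very definition of $q$, for every $j<q$ the module $H^j_{I,J}(M)$ is Artinian. First I would invoke the standard fact that an Artinian module $A$ over a Noetherian ring has finite support: such an $A$ decomposes as a finite direct sum of $\fm$-power-torsion submodules indexed by finitely many maximal ideals $\fm$, whence $\Supp_R(A)\subseteq\Max R$ is a finite set of maximal ideals. Next I would observe that applying $\Ext^i_R(R/I,-)$ cannot enlarge the support: since $R/I$ is finitely generated, localization commutes with $\Ext$, so for $\fp\notin\Supp_R(A)$ we have $A_{\fp}=0$ and hence $\Ext^i_R(R/I,A)_{\fp}\cong\Ext^i_{R_{\fp}}((R/I)_{\fp},A_{\fp})=0$; that is, $\Supp_R(\Ext^i_R(R/I,A))\subseteq\Supp_R(A)$. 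Taking $A=H^j_{I,J}(M)$ for each $j<q$ then yields that $\Supp_R(\Ext^i_R(R/I,H^j_{I,J}(M)))$ is finite for all $i$, in particular for all $i\le q+1$.

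Both hypotheses of Theorem \ref{ass1} now hold with $n=q$, so it gives at once that $\Ass_R(\Hom_R(R/I,H^q_{I,J}(M)))$ is finite, which is exactly the claim. I do not anticipate a genuine obstacle here: the whole content of the corollary is the translation of the Artinian hypothesis in degrees below $q$ into the finite-support hypothesis that Theorem \ref{ass1} demands. The only external inputs are the finiteness of the support of an Artinian module and the stability of this finiteness under $\Ext^i_R(R/I,-)$, both of which are routine; the one point to state with care is the support-containment step, since it relies on the finite generation of $R/I$ to commute localization past $\Ext$.
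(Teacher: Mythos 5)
Your proposal is correct and follows exactly the route the paper intends: the corollary is stated without proof as an immediate consequence of Theorem \ref{ass1}, and your argument supplies precisely the missing verification, namely that the Artinian modules $H^{j}_{I,J}(M)$ for $j<q$ have finite support and that this finiteness passes to $\Ext^{i}_{R}(R/I,-)$. (A marginally shorter variant of your support step: $\Ext^{i}_{R}(R/I,A)$ is a subquotient of $\Hom_R(F_i,A)\cong A^{n_i}$ for a finite free resolution $F_\bullet$ of $R/I$, hence is itself Artinian with finite support.)
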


In the rest of this paper we consider the set of associated prime
ideals of some $\Ext$ modules of local cohomology modules defined by
a pair of ideals.
\begin{thm}\label{ass2}
Let $n$ be a non-negative integer and $M$ be an $R$-module such that
\linebreak\emph{$\Ass_R(\Ext^{n+1} _{R}(R/I,M))$} and \emph{$\Supp_R
(\Ext^{i}_{R}(R/I,H^{j}_{I,J} (M)))$} are finite for all $i \leq
n+2$ and all $j<n$. Then so is  $\Ass_R(\Ext^{1}
_{R}(R/I,H^{n}_{I,J}(M)))$.
\end{thm}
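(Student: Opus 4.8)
The strategy mirrors the proof of Theorem \ref{ass1}, working with the same Grothendieck spectral sequence $E_2^{i,j}=\Ext^i_R(R/I,H^j_{I,J}(M))\Rightarrow \Ext^{i+j}_R(R/I,M)$ of Lemma \ref{**}, but now the target cohomology is the $(n+1)$-st $\Ext$ rather than the $n$-th, and the module of interest sits in the column $i=1$ rather than $i=0$. The plan is to show that $\Ass_R(E_\infty^{1,n})$ is finite and then to bridge the gap between $E_\infty^{1,n}=E_2^{1,n}=\Ext^1_R(R/I,H^n_{I,J}(M))$ along the pages of the spectral sequence, controlling the difference at each stage by a subquotient whose support is known to be finite.

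First I would set up the differentials entering and leaving the $(1,n)$ spot. For $2\le r\le n+2$ there are exact sequences
\begin{equation}\label{r2}
E_r^{1-r,n+r-1}\xrightarrow{d_r^{1-r,n+r-1}} E_r^{1,n}\xrightarrow{d_r^{1,n}} E_r^{1+r,n-r+1},
\end{equation}
but since $E_2^{i,j}=0$ for $i<0$, the incoming differential vanishes once $r\ge 2$, so $E_{r+1}^{1,n}=\Ker(d_r^{1,n})$ is a submodule of $E_r^{1,n}$. The target $E_r^{1+r,n-r+1}$ is a subquotient of $E_2^{1+r,n-r+1}=\Ext^{1+r}_R(R/I,H^{n-r+1}_{I,J}(M))$, which for $2\le r\le n+1$ has $n-r+1<n$ and $1+r\le n+2$, hence finite support by hypothesis. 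Thus by Lemma \ref{*} applied to $0\to E_{r+1}^{1,n}\to E_r^{1,n}\to \Im(d_r^{1,n})\to 0$, finiteness of $\Ass_R(E_{r+1}^{1,n})$ forces finiteness of $\Ass_R(E_r^{1,n})$ (working downward in $r$), reducing the problem to showing $\Ass_R(E_\infty^{1,n})\cong\Ass_R(E_{n+3}^{1,n})$ is finite. Here I would note that $E_\infty^{1,n}=E_{n+3}^{1,n}$ because no differential can enter or leave the $(1,n)$ spot past page $n+2$ (the source $E_r^{1-r,n+r-1}$ vanishes and the target $E_r^{1+r,n-r+1}$ lives in negative $j$ once $r\ge n+2$).

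Next I would exploit the finite filtration on the abutment $\Ext^{n+1}_R(R/I,M)$, namely
$$0=\varphi^{n+2}H^{n+1}\subseteq\varphi^{n+1}H^{n+1}\subseteq\cdots\subseteq\varphi^{0}H^{n+1}=\Ext^{n+1}_R(R/I,M)$$
with $E_\infty^{i,n+1-i}\cong\varphi^iH^{n+1}/\varphi^{i+1}H^{n+1}$. As in Theorem \ref{ass1}, the top pieces $\varphi^iH^{n+1}$ for $i\ge 2$ are built from subquotients $E_\infty^{i,n+1-i}$ of $E_2^{i,n+1-i}=\Ext^i_R(R/I,H^{n+1-i}_{I,J}(M))$ with $n+1-i<n$, hence of finite support; an induction downward from $i=n+1$ shows $\Supp_R(\varphi^2H^{n+1})$ is finite. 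Then the short exact sequence $0\to\varphi^2H^{n+1}\to\varphi^1H^{n+1}\to E_\infty^{1,n}\to 0$ together with $0\to\varphi^1H^{n+1}\to\Ext^{n+1}_R(R/I,M)\to E_\infty^{0,n+1}\to 0$ lets me bound $\Ass_R(E_\infty^{1,n})$: from the second sequence and the hypothesis $\sharp\Ass_R(\Ext^{n+1}_R(R/I,M))<\infty$ I get $\Ass_R(\varphi^1H^{n+1})$ finite, and from the first sequence (again via Lemma \ref{*}, since $\varphi^2H^{n+1}$ has finite support) I conclude $\Ass_R(E_\infty^{1,n})$ is finite, as required.

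The main obstacle I anticipate is the bookkeeping for $E_\infty^{1,n}$, which unlike the edge term $E_\infty^{0,n}$ in Theorem \ref{ass1} is not simply a quotient of the abutment but an interior filtration quotient; one must therefore climb the filtration \emph{twice} — once to reach $\varphi^1H^{n+1}$ through $E_\infty^{0,n+1}$, and once to peel off $\varphi^2H^{n+1}$ — and be careful that each index range genuinely keeps $j<n$ so that the support hypotheses apply. The extra $\Ext$ degree (needing $i\le n+2$ and $\Ass_R(\Ext^{n+1})$ finite rather than $\Ext^n$) is precisely what is consumed by these two passes, so verifying the index bounds $1+r\le n+2$ and $n-r+1<n$ at the boundary cases $r=2$ and $r=n+1$ is the delicate point to check.
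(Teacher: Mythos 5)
Your proposal is correct and follows essentially the same route as the paper's proof: the same spectral sequence of Lemma \ref{**}, the same reduction from $E_{2}^{1,n}$ to $E_{\infty}^{1,n}$ by peeling off the outgoing differentials (whose targets $E_r^{1+r,n-r+1}$ have finite support under the hypotheses), and the same use of the filtration of $\Ext^{n+1}_{R}(R/I,M)$ to realize $E_{\infty}^{1,n}\cong \varphi^{1}H^{n+1}/\varphi^{2}H^{n+1}$ with $\Ass_R(\varphi^{1}H^{n+1})$ finite and $\Supp_R(\varphi^{2}H^{n+1})$ finite, concluding via Lemma \ref{*}. The only cosmetic point is that the page-by-page step deducing finiteness of $\Ass_R(E_{r}^{1,n})$ from that of $\Ass_R(E_{r+1}^{1,n})$ really rests on the inclusion $\Ass(B)\subseteq \Ass(A)\cup \Ass(C)$ for a short exact sequence $0\to A\to B\to C\to 0$ (together with $\Ass(C)\subseteq\Supp(C)$) rather than on Lemma \ref{*} as literally stated, which is exactly the implicit step in the paper's Theorem \ref{ass1}.
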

\begin{proof}

Considering the convergence of the spectral sequences of Lemma
\ref{**}, we have to show that $\Ass_R(E_{2}^{1,n})$ is a finite
set. Using similar arguments as used in Theorem \ref{ass1}, one can
see that it is enough to show that $\Ass_R(E_{\infty}^{1,n})=
\Ass_R(E_{n+2}^{1,n})$ is a finite set.

By the concept of convergence of spectral sequences, there exists a
filtration
$$0=\varphi^{n+2}H^{n+1}\subseteq \varphi^{n+1}H^{n+1}\subseteq...
\subseteq \varphi^{1}H^{n+1}\subseteq\varphi^{0}H^{n+1}=
\Ext^{n+1}_{R}(R/I,M)$$ of submodules of $\Ext^{n+1}_{R}(R/I,M)$
such that $E_{\infty}^{i,n+1-i}\cong
\varphi^{i}H^{n+1}/\varphi^{i+1}H^{n+1}$ for all $i=0,...,n+1$.
Using the fact that $\sharp \Supp_R(E_{2}^{i,j})< \infty$ for all
$i\leq n+2$ and all $j<n$ one can see that
$\Supp_R(\varphi^{i}H^{n+1})$ is a finite set for all $i=2,...,n+2$.
Also, $\sharp \Ass_R(\varphi^{1}H^{n+1})<\infty$. Now, since
$$E_{n+2}^{1,n}\cong E_{\infty}^{1,n}\cong
\varphi^{1}H^{n+1}/\varphi^{2}H^{n+1},$$ using Lemma \ref{*}, we
have $\sharp \Ass_R(E_{\infty}^{1,n})<\infty$, and the result
follows.
\end{proof}

The following theorem presents a necessary and sufficient condition
for the finiteness of the set
$\Ass_R(\Ext^{i}_{R}(R/I,H^{n}_{I,J}(M)))$ when $ i= 1, 2.$

\begin{thm}\label{ass3}
Let $n$ be a non-negative integer and $M$ be an $R$-module such that
the sets \linebreak$\Supp_R(\Ext^{n+1} _{R}(R/I,M))$    and
 $\Supp_R(\Ext^{i}_{R}(R/I,H^{j}_{I,J} (M)))$  are finite for
all $i \leq n+2$ and all $j< n$. Then   $\Ass_R
(\Hom_{R}(R/I,H^{n+1}_{I,J}(M)))$ is finite if and only if
 $\Ass_R (\Ext^{2}_{R}(R/I,H^{n}_{I,J}(M)))$  is finite.

\end{thm}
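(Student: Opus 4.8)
The plan is to extract both implications from the single Grothendieck spectral sequence of Lemma \ref{**}, $E_2^{i,j}=\Ext^i_R(R/I,H^j_{I,J}(M))\Rightarrow \Ext^{i+j}_R(R/I,M)$, by concentrating on the one differential that directly joins the two modules in the statement, namely $d_2^{0,n+1}\colon E_2^{0,n+1}\to E_2^{2,n}$, where $E_2^{0,n+1}=\Hom_R(R/I,H^{n+1}_{I,J}(M))$ and $E_2^{2,n}=\Ext^2_R(R/I,H^n_{I,J}(M))$. I would show that the kernel and the cokernel of $d_2^{0,n+1}$ carry only finitely many associated primes, so that finiteness of $\Ass_R$ transfers between the two terms in both directions.

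First I would bound $\ker d_2^{0,n+1}=E_3^{0,n+1}$ exactly as in the proof of Theorem \ref{ass1}. For $3\le r\le n+2$ the quotient $E_r^{0,n+1}/E_{r+1}^{0,n+1}\cong \im d_r^{0,n+1}$ is a subquotient of $E_2^{r,n+2-r}$, sitting in column $r\le n+2$ and row $n+2-r<n$, hence of finite support by hypothesis; the bottom graded piece $E_\infty^{0,n+1}$ is a quotient of $\Ext^{n+1}_R(R/I,M)$ and so has finite support as well. Therefore $\Supp_R(E_3^{0,n+1})$ is finite, and applying Lemma \ref{*} to $0\to E_3^{0,n+1}\to E_2^{0,n+1}\to \im d_2^{0,n+1}\to 0$ shows that $\Ass_R(\Hom_R(R/I,H^{n+1}_{I,J}(M)))$ is finite exactly when $\Ass_R(\im d_2^{0,n+1})$ is finite. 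Since $\im d_2^{0,n+1}$ is a submodule of $\Ext^2_R(R/I,H^n_{I,J}(M))$, the sufficiency direction follows at once: finiteness of $\Ass_R(\Ext^2_R(R/I,H^n_{I,J}(M)))$ forces finiteness of $\Ass_R(\im d_2^{0,n+1})$, hence of $\Ass_R(\Hom_R(R/I,H^{n+1}_{I,J}(M)))$.

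For the converse I would pass to $\coker d_2^{0,n+1}=\Ext^2_R(R/I,H^n_{I,J}(M))/\im d_2^{0,n+1}$, exploiting the two short exact sequences $0\to \im d_2^{0,n+1}\to \Ext^2_R(R/I,H^n_{I,J}(M))\to \coker d_2^{0,n+1}\to 0$ and $0\to E_3^{2,n}\to \coker d_2^{0,n+1}\to \im d_2^{2,n}\to 0$. Here $\im d_2^{2,n}$ is a submodule of $E_2^{4,n-1}$, which has finite support for $n\ge 2$ (the small values of $n$ requiring only an easier separate bookkeeping), so by Lemma \ref{*} the problem reduces to proving that $\Ass_R(E_3^{2,n})$ is finite. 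Filtering $E_3^{2,n}$ by the higher pages, the quotients $\im d_r^{2,n}\subseteq E_r^{2+r,n+1-r}$ for $3\le r\le n$ are again subquotients of $E_2$-terms in columns $\le n+2$ and rows $<n$, hence of finite support, so $E_3^{2,n}$ differs from the bottom of this filtration by a finite-support module.

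The hard part, and the step I expect to be the real obstacle, is precisely that bottom. The final differential $d_{n+1}^{2,n}$ lands in column $n+3$, so $\im d_{n+1}^{2,n}$ is a subquotient of $\Ext^{n+3}_R(R/I,\Gamma_{I,J}(M))$, while the permanent cycles $E_\infty^{2,n}$ form a graded piece of $\Ext^{n+2}_R(R/I,M)$, equivalently a subquotient of $E_2^{2,n}$ itself. Neither term is reached by the stated hypotheses, which only bound columns $\le n+2$ in rows $<n$ together with $\Ext^{n+1}_R(R/I,M)$; in particular the rows $\ge n$ contributions $E_\infty^{2,n},E_\infty^{1,n+1},E_\infty^{0,n+2}$ to $\Ext^{n+2}_R(R/I,M)$ remain uncontrolled from the kernel side. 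Making this direction go therefore hinges on producing finiteness of $\Ass_R(E_\infty^{2,n})$ and $\Ass_R(\im d_{n+1}^{2,n})$ from the given data, and any successful argument must extract control at cohomological degrees $n+2$ and $n+3$ that is invisible in the analysis of $E_3^{0,n+1}$. Once $\Ass_R(E_3^{2,n})$ is known to be finite, feeding it back through the three short exact sequences via Lemma \ref{*} returns the finiteness of $\Ass_R(\Ext^2_R(R/I,H^n_{I,J}(M)))$ and closes the equivalence.
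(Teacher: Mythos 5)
Your $(\Leftarrow)$ direction is correct and is essentially the paper's argument: you control $\Supp_R(E_3^{0,n+1})$ via the filtration $E_{n+3}^{0,n+1}=E_\infty^{0,n+1}\subseteq\cdots\subseteq E_3^{0,n+1}$, whose graded pieces are subquotients of $E_2^{r,n+2-r}$ for $3\le r\le n+2$ together with a quotient of $\Ext^{n+1}_R(R/I,M)$, all of finite support by hypothesis, and then transfer finiteness of $\Ass_R$ across $0\to E_3^{0,n+1}\to E_2^{0,n+1}\to \Im d_2^{0,n+1}\to 0$ using $\Im d_2^{0,n+1}\subseteq E_2^{2,n}$. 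The paper does the same computation, merely running through the pages one at a time instead of isolating the kernel and image of $d_2^{0,n+1}$ in a single short exact sequence.

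Your $(\Rightarrow)$ direction is incomplete, as you acknowledge: after passing to the cokernel of $d_2^{0,n+1}$ you are left needing $\Ass_R(E_3^{2,n})$ finite, where $E_3^{2,n}=\Ker d_2^{2,n}/\Im d_2^{0,n+1}$, and the filtration of this module terminates in $\Im d_{n+1}^{2,n}$ (a subquotient of $E_2^{n+3,0}=\Ext^{n+3}_R(R/I,\Gamma_{I,J}(M))$, sitting in column $n+3>n+2$) and in $E_\infty^{2,n}$ (a subquotient of $\Ext^{n+2}_R(R/I,M)$); neither is reached by the stated hypotheses. You should know, however, that the obstruction you isolated is not resolved by the paper either: the published proof of this implication asserts that $0\to\Im d_2^{0,n+1}\to E_2^{2,n}\xrightarrow{d_2^{2,n}}E_2^{4,n-1}$ is exact, but exactness at $E_2^{2,n}$ would force $\Ker d_2^{2,n}=\Im d_2^{0,n+1}$, i.e. $E_3^{2,n}=0$; in general this sequence is only a complex whose middle homology is precisely the module $E_3^{2,n}$ you could not control. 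So your analysis does not overlook an idea that the paper supplies --- it exposes a genuine gap in the paper's own proof of the forward implication, which as written would need either extra hypotheses (finiteness of the supports of $\Ext^{n+2}_R(R/I,M)$ and of $\Ext^{n+3}_R(R/I,\Gamma_{I,J}(M))$ would suffice, by your own filtration argument) or a new idea. The backward implication is sound in both treatments.
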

\begin{proof}
$(\Leftarrow)$ Again, consider the convergence of spectral sequences
of Lemma \ref{**} and assume that $Ass_R (E_{2}^{2,n})$ is finite.
Since $E_{2}^{i,j}=0$ for all $i < 0$ or $j < 0$, using similar
arguments as used in Theorem \ref{ass1}, one can see that
$E_{\infty}^{0,n+1}\cong E_{n+3}^{0,n+1}$ and in order to prove that
$\sharp \Ass_R (E_{2}^{0,n+1})<\infty$ we have to show that $\sharp
\Ass_R(E_{\infty}^{0,n+1})<\infty$.

 There exists a filtration
$$0=\varphi^{n+2}H^{n+1}\subseteq \varphi^{n+1}H^{n+1}\subseteq...
\subseteq \varphi^{1}H^{n+1}\subseteq\varphi^{0}H^{n+1}=
\Ext^{n+1}_{R}(R/I,M)$$ of submodules of $\Ext^{n+1}_{R}(R/I,M)$
such that $E_{\infty}^{0,n+1}\cong
\Ext^{n+1}_{R}(R/I,M)/\varphi^{1}H^{n+1}$.
 Since \linebreak$\sharp \Ass_R(\Ext^{n+1}_{R}(R/I,M))<\infty $ we have $\sharp \Ass_R
(E_{\infty}^{0,n+1})<\infty$, as desired

$(\Rightarrow)$ Now, assume that $\Ass_R
(\Hom_{R}(R/I,H^{n+1}_{I,J}(M)))<\infty$ and consider the exact
sequence
$$0\rightarrow \Ker d_{2}^{0,n+1}\rightarrow
E_{2}^{0,n+1}\xrightarrow{d_{2}^{0,n+1}} \Im
d_{2}^{0,n+1}\rightarrow 0.$$ Since $\Ker
d_{2}^{0,n+1}=E_{3}^{0,n+1}$ and $\sharp \Supp_R (E_{3}^{0,n+1})<
\infty$, in view of Lemma \ref{*}, we have \linebreak $\sharp \Ass_R
(\Im d_{2 }^{0,n+1})< \infty$. Now, using the exact sequence
$$0\rightarrow \Im d_{2}^{0,n+1}\rightarrow
E_{2}^{2,n}\xrightarrow{d_{2}^{2,n}} E_{2}^{4,n-1}$$ and the fact
that $ E_{2}^{4,n-1}=\Ext^{4}_{R}(R/I,H^{n-1}_{I,J}(M))$ has finite
support, we have $\sharp \Ass_R (E_{2}^{2,n})<\infty$, as desired.

\end{proof}
\begin{thm}\label{assnd}
Let $n$ be a non-negative integer and $M$ be an $R$-module of
dimension $d$, such that \emph{$\Ass_R(\Ext^{n+d} _{R}(R/I,M))$} and
\emph{$\Supp_R (\Ext^{i}_{R}(R/I,H^{j}_{I,J} (M)))$} are finite for
all $i \geq n+1$ and all $j<d$. Then \emph{$\Ass_R(\Ext^{n}
_{R}(R/I,H^{d}_{I,J}(M)))$} is finite.
\end{thm}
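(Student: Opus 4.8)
The plan is to run the same Grothendieck spectral sequence $E_2^{i,j}=\Ext^i_R(R/I,H^j_{I,J}(M))\Rightarrow\Ext^{i+j}_R(R/I,M)$ of Lemma \ref{**} as in Theorem \ref{ass1}, but now to extract information about the \emph{top} row $j=d$ rather than about the left-hand columns. The decisive new ingredient is the Grothendieck-type vanishing $H^j_{I,J}(M)=0$ for all $j>\dim M=d$, valid for local cohomology defined by a pair of ideals (see \cite{TAK}), which forces $E_2^{i,j}=0$ whenever $j>d$. First I would use this to observe that every differential $d_r^{\,n-r,\,d+r-1}\colon E_r^{n-r,d+r-1}\rightarrow E_r^{n,d}$ entering the spot $(n,d)$ starts in a row $d+r-1>d$ and hence vanishes. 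Consequently there are no incoming differentials at $(n,d)$, so for every $r\geq 2$ one has $E_{r+1}^{n,d}=\Ker d_r^{n,d}$, giving a short exact sequence
$$0\rightarrow E_{r+1}^{n,d}\rightarrow E_r^{n,d}\xrightarrow{d_r^{n,d}}\Im d_r^{n,d}\rightarrow 0,$$
where $\Im d_r^{n,d}\subseteq E_r^{n+r,d-r+1}$.

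Next I would control the right-hand term. Since $E_r^{n+r,d-r+1}$ is a subquotient of $E_2^{n+r,d-r+1}=\Ext^{n+r}_R(R/I,H^{d-r+1}_{I,J}(M))$ with $n+r\geq n+1$ and $d-r+1<d$ (the term being $0$ once $d-r+1<0$), the hypothesis yields that $\Supp_R(\Im d_r^{n,d})$ is finite for all $r\geq 2$. From the short exact sequence above and the standard containment $\Ass_R(E_r^{n,d})\subseteq\Ass_R(E_{r+1}^{n,d})\cup\Ass_R(\Im d_r^{n,d})$ it then follows that $\sharp\Ass_R(E_r^{n,d})<\infty$ provided $\sharp\Ass_R(E_{r+1}^{n,d})<\infty$. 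Because the spectral sequence is bounded and the spot $(n,d)$ receives no differentials and emits none once $r\geq d+2$, we have $E_r^{n,d}=E_\infty^{n,d}$ for $r\gg 0$; so a \emph{downward} induction on $r$ reduces the whole assertion to the finiteness of $\Ass_R(E_\infty^{n,d})$, with $E_2^{n,d}=\Ext^n_R(R/I,H^d_{I,J}(M))$ the object at which the induction terminates.

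Finally I would establish that base case through the abutment filtration $0=\varphi^{n+d+1}H^{n+d}\subseteq\cdots\subseteq\varphi^0H^{n+d}=\Ext^{n+d}_R(R/I,M)$ with $E_\infty^{p,n+d-p}\cong\varphi^pH^{n+d}/\varphi^{p+1}H^{n+d}$. For $p>n$ one has $n+d-p<d$ and $p\geq n+1$, so each such quotient is a subquotient of $E_2^{p,n+d-p}$ and has finite support; building up from $\varphi^{n+d+1}H^{n+d}=0$ shows that $\Supp_R(\varphi^{n+1}H^{n+d})$ is finite, while $\varphi^nH^{n+d}\subseteq\Ext^{n+d}_R(R/I,M)$ has finitely many associated primes by hypothesis. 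Applying Lemma \ref{*} to $\varphi^{n+1}H^{n+d}\rightarrow\varphi^nH^{n+d}\rightarrow E_\infty^{n,d}\rightarrow 0$ then yields $\sharp\Ass_R(E_\infty^{n,d})<\infty$, which completes the plan. The main obstacle, and the point that distinguishes this statement from Theorems \ref{ass1}--\ref{ass3}, is precisely the need for the top-degree vanishing: it is what turns the top row into a descending chain of submodules and thereby licenses the downward induction; without it the incoming differentials would obstruct recovering $E_2^{n,d}$ from $E_\infty^{n,d}$.
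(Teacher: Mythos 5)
Your proposal is correct and is exactly the argument the paper intends: the paper's ``proof'' is only the one-line remark that the method is that of Theorem \ref{ass2} combined with the vanishing theorem \cite[4.7]{TAK}, and you have supplied precisely the missing details --- the vanishing of $E_2^{i,j}$ for $j>d$ kills all incoming differentials at $(n,d)$, the outgoing ones land in terms of finite support by hypothesis, and the abutment filtration of $\Ext^{n+d}_R(R/I,M)$ together with Lemma \ref{*} handles $E_\infty^{n,d}$. No gaps; all index checks ($n+r\geq n+1$, $d-r+1<d$, stabilization at $r=d+2$) are right.
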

\begin{proof}
The method of the proof is similar to the Theorem \ref{ass2},
considering \cite[4.7]{TAK}.

%Considering the spectral sequences of Lemma \ref{**} we have to show
%that $\Ass(E_{2}^{n,d})$ is a finite set. using similar arguments as
%used in Theorem \ref{ass1}, one can see that it is enough to show
%that there exists an integer $k$ such that
%$\Ass(E_{\infty}^{n,d})\cong \Ass(E_{k}^{n,d})$ is finite. There
%exists a filtration

%$0=\varphi^{n+d+1}H^{n+d+1}\subseteq
%\varphi^{n+d}H^{n+d}\subseteq... \subseteq
%\varphi^{1}H^{n+d}\subseteq\varphi^{0}H^{n+d}=
%\Ext^{n+d}_{R}(R/I,M)$ of submodules of $\Ext^{n+d}_{R}(R/I,M)$ such
%that $E_{\infty}^{i,n+d-i}\cong
%\varphi^{i}H^{n+d}/\varphi^{i+1}H^{n+d}$ for all $i=0,...,n+d$.
%Using the fact that $\sharp \Supp(E_{2}^{i,j})< \infty$ for all
%$i\geq n+2$, one can see that $\Supp(\varphi^{i}H^{n+d})$ is a
%finite set for all $i=n+1,...,n+d$. Also, $\sharp
%\Ass(\varphi^{n}H^{n+d})<\infty$. Now, since $E_{k}^{n,d}\cong
%E_{\infty}^{n,d}\cong \varphi^{n}H^{n+d}/\varphi^{n+1}H^{n+d}$,
%using Lemma \ref{*}, we have $\sharp \Ass(E_{\infty}^{n,d})<\infty$,
%and the result follows.
\end{proof}

In the rest of this paper, we study "the grade" of prime ideals
$\fp\in \Ass_R (H^{t}_{I,J}(M))$ on $M$.

For an $R$-module $M$ and an ideal $\fa$ of $R$,   the grade of
$\fa$ on $M$ is defined by
$$\underset{M}{\grade\fa} := \inf \{i\in
\mathbb{N}_{0} : H^{i}_{\fa}(M)\neq 0\},$$
 if this infimum exists,
and $\infty$ otherwise. If $M$ is a finite $R$-module and $\fa M\neq
M$, this definition coincides with the length of a maximal
$M$-sequence in $\fa$ (cf. \cite[6.2.7]{B-SH}).

Also, we shall use the following notations introduced in \cite{TAK},
in which $W(I,J)$ is closed under specialization, but not
necessarily a closed subset of $\Spec(R)$. \[W(I,J):= \{\fp\in
\Spec(R): I^{n}\subseteq \fp + J \mbox{ for some integer }
n\geq1\},\] and

\[\widetilde{W}(I,J):= \{\fa: \fa \mbox{  is an ideal of } R \mbox{and} I^{n}\subseteq
\fa+J \mbox{ for some integer } n\geq 1\}.\]

The following lemma can be proved using \cite[3.2]{TAK}.

\begin{lem}\label{supp}
For any non-negative integer $i$ and $R$-module $M$,

$(i)$ \emph{$\Supp_R(H^{i}_{I,J}(M))\subseteq \underset{\fa \in
\widetilde{W}(I,J)}{\bigcup}\Supp(H^{i}_{\fa}(M))$}.

$(ii)$ \emph{$\Supp_R(H^{i}_{I,J}(M))\subseteq \Supp_R(M) \cap
W(I,J)$}.

\end{lem}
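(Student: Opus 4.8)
The plan is to reduce everything to the ordinary local cohomology modules $H^{i}_{\fa}(M)$ by means of the direct-limit presentation recorded in \cite[3.2]{TAK}, namely the natural isomorphism
$$H^{i}_{I,J}(M)\cong \varinjlim_{\fa\in \widetilde{W}(I,J)}H^{i}_{\fa}(M),$$
where the system is directed by reverse inclusion: for $\fa,\fb\in\widetilde{W}(I,J)$ one has $\fa\fb\in\widetilde{W}(I,J)$ with $\fa\fb\subseteq\fa\cap\fb$ (since $I^{n+m}\subseteq(\fa+J)(\fb+J)\subseteq\fa\fb+J$), so the index set is directed, and the transition maps $H^{i}_{\fa}(M)\rightarrow H^{i}_{\fb}(M)$ for $\fb\subseteq\fa$ are the usual ones induced by $\Gamma_{\fa}(-)\subseteq\Gamma_{\fb}(-)$. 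Granting this presentation, part $(i)$ is immediate and part $(ii)$ follows by tracking supports through the limit.

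For $(i)$ I would use that localization is exact and commutes with direct limits, so that for every $\fp\in\Spec(R)$,
$$\left(H^{i}_{I,J}(M)\right)_{\fp}\cong \varinjlim_{\fa\in\widetilde{W}(I,J)}\left(H^{i}_{\fa}(M)\right)_{\fp}.$$
If $\fp\notin \bigcup_{\fa\in\widetilde{W}(I,J)}\Supp(H^{i}_{\fa}(M))$, then every module $(H^{i}_{\fa}(M))_{\fp}$ vanishes, hence so does the colimit, and therefore $(H^{i}_{I,J}(M))_{\fp}=0$. Contrapositively, $\fp\in\Supp(H^{i}_{I,J}(M))$ forces $\fp\in\Supp(H^{i}_{\fa}(M))$ for at least one $\fa\in\widetilde{W}(I,J)$, which is exactly the asserted inclusion. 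Notice that this step needs no directedness beyond the trivial fact that a colimit of zero modules is zero.

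For $(ii)$ I would combine $(i)$ with two standard facts about ordinary local cohomology, namely $\Supp(H^{i}_{\fa}(M))\subseteq V(\fa)$ and $\Supp(H^{i}_{\fa}(M))\subseteq\Supp(M)$. Take $\fp\in\Supp(H^{i}_{I,J}(M))$; by $(i)$ there is $\fa\in\widetilde{W}(I,J)$ with $\fp\in\Supp(H^{i}_{\fa}(M))$. Then $\fp\in\Supp(M)$, and $\fp\in V(\fa)$ yields $\fa\subseteq\fp$. Since $\fa\in\widetilde{W}(I,J)$ there is $n\geq 1$ with $I^{n}\subseteq\fa+J\subseteq\fp+J$, i.e. $\fp\in W(I,J)$. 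Hence $\fp\in\Supp(M)\cap W(I,J)$, as required.

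The only genuine subtlety---and the single place where I would take care---is the correct formulation and citation of the direct-limit isomorphism from \cite[3.2]{TAK}, including the verification that $\widetilde{W}(I,J)$ is indeed directed (via $\fa,\fb\mapsto\fa\fb$) so that the filtered colimit is well behaved. Once that presentation is in hand, both containments are purely formal consequences of the interaction between support, localization, and filtered colimits, together with the elementary containment $\Supp(H^{i}_{\fa}(M))\subseteq V(\fa)\cap\Supp(M)$ for ordinary local cohomology.
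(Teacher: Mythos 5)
Your proof is correct and takes exactly the route the paper intends: the paper's entire ``proof'' is the one-line remark that the lemma follows from \cite[3.2]{TAK}, which is precisely the direct-limit presentation $H^{i}_{I,J}(M)\cong \varinjlim_{\fa\in\widetilde{W}(I,J)}H^{i}_{\fa}(M)$ that you invoke and then combine with the standard facts $\Supp(H^{i}_{\fa}(M))\subseteq V(\fa)\cap\Supp(M)$. Your write-up simply supplies the details (directedness of $\widetilde{W}(I,J)$, compatibility of localization with filtered colimits) that the paper leaves implicit.
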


 The following theorem was proved in \cite[3.6]{T-T} under the hypothesis that $M$ is finite. But the proof is
not correct. Here we bring an extension and another proof of this
theorem.
\begin{thm}\label{tal}
Let $M$ be an $R$-module and $t=inf\{i\in \mathbb{N}_{0}:
H^{i}_{I,J}(M)\neq 0 \}$ be a non-negative integer. Then for all
\emph{$\fp\in \Ass_R (H^{t}_{I,J}(M))$},
\emph{$\underset{M}{\grade\fp=} t$.}
\end{thm}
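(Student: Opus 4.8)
The plan is to compare the ordinary local cohomology $H^{i}_{\fp}(M)$ with $H^{i}_{I,J}(M)$ by means of a Grothendieck spectral sequence, in exactly the spirit of Lemma \ref{**}, and then to read off the two inequalities $\underset{M}{\grade\fp}\geq t$ and $\underset{M}{\grade\fp}\leq t$ from the shape of its $E_{2}$-page. Recall that here the grade is computed through ordinary local cohomology, $\underset{M}{\grade\fp}=\inf\{i:H^{i}_{\fp}(M)\neq 0\}$.

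First I would record the geometric input: since $\fp\in\Ass_R(H^{t}_{I,J}(M))\subseteq\Supp_R(H^{t}_{I,J}(M))$, Lemma \ref{supp}$(ii)$ gives $\fp\in W(I,J)$, that is, $I^{n}\subseteq\fp+J$ for some $n\geq 1$. The key functorial observation is then that $\Gamma_{\fp}=\Gamma_{\fp}\circ\Gamma_{I,J}$. Indeed, if $\fp^{k}x=0$ then, using $I^{nk}\subseteq(\fp+J)^{k}\subseteq\fp^{k}+J$, one gets $I^{nk}x\subseteq\fp^{k}x+Jx=Jx$, so every $\fp$-torsion element is $(I,J)$-torsion; hence $\Gamma_{\fp}(M)\subseteq\Gamma_{I,J}(M)$ and the composite identity follows. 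Since $\Gamma_{I,J}$ sends injective modules to injective (hence $\Gamma_{\fp}$-acyclic) modules by \cite[3.2]{TAK} and \cite[2.1.4]{B-SH}, Grothendieck's theorem \cite[10.47]{ROT} yields the convergent spectral sequence $E_{2}^{p,q}=H^{p}_{\fp}(H^{q}_{I,J}(M))\Rightarrow H^{p+q}_{\fp}(M)$.

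With this in hand I would extract both bounds from the $E_{2}$-page. By the choice of $t$ we have $H^{q}_{I,J}(M)=0$ for $q<t$, so $E_{2}^{p,q}=0$ whenever $q<t$; since also $p\geq 0$, every nonzero $E_{2}$-term lies in total degree $\geq t$, forcing $H^{i}_{\fp}(M)=0$ for $i<t$ and hence $\underset{M}{\grade\fp}\geq t$. For the reverse inequality I would isolate the corner term: in total degree $t$ the only possibly nonzero entry is $E_{2}^{0,t}=\Gamma_{\fp}(H^{t}_{I,J}(M))$, while the differentials $d_{r}$ leaving or entering it land either in the vanishing region $q<t$ or in negative columns, so $E_{\infty}^{0,t}=E_{2}^{0,t}$ and therefore $H^{t}_{\fp}(M)\cong\Gamma_{\fp}(H^{t}_{I,J}(M))$. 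Finally, because $\fp\in\Ass_R(H^{t}_{I,J}(M))$ there is an element of $H^{t}_{I,J}(M)$ annihilated exactly by $\fp$, which lies in $\Gamma_{\fp}(H^{t}_{I,J}(M))$; thus this module is nonzero, $H^{t}_{\fp}(M)\neq 0$, and $\underset{M}{\grade\fp}\leq t$. Combining the two inequalities gives $\underset{M}{\grade\fp}=t$.

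The elementary containments and the filtration bookkeeping on the $E_{2}$-page mirror the computations already carried out in Theorem \ref{ass1}, so they are routine. The main obstacle I anticipate is the careful justification of the composite-functor identity $\Gamma_{\fp}=\Gamma_{\fp}\circ\Gamma_{I,J}$ together with the acyclicity needed to invoke the spectral sequence, and the precise edge-map analysis identifying $H^{t}_{\fp}(M)$ with the surviving corner $\Gamma_{\fp}(H^{t}_{I,J}(M))$; notably, this route never uses the erroneous support equality of \cite[3.6]{T-T}.
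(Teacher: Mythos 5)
Your proof is correct, but it takes a genuinely different route from the paper. The paper argues by induction on $t$: it dimension-shifts along $0\to M\to E_R(M)\to L\to 0$ using the isomorphism $H^{i}_{I,J}(L)\cong H^{i+1}_{I,J}(M)$, and then needs a separate, somewhat delicate argument at the edge case $t=1$ to show $\Gamma_{\fp}(E_R(M))=0$ (which is where the membership $\fp\in W(I,J)$ enters for the paper). You instead package everything into one composite-functor spectral sequence $H^{p}_{\fp}(H^{q}_{I,J}(M))\Rightarrow H^{p+q}_{\fp}(M)$, whose existence rests on the two facts you correctly isolate: $\fp\in W(I,J)$ (from Lemma \ref{supp}$(ii)$) forces $\Gamma_{\fp}\subseteq\Gamma_{I,J}$ and hence $\Gamma_{\fp}=\Gamma_{\fp}\circ\Gamma_{I,J}$ --- your verification via $(\fp+J)^{k}\subseteq\fp^{k}+J$ is sound --- and $\Gamma_{I,J}$ preserves injectives, exactly as in the paper's Lemma \ref{**}. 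The vanishing $E_{2}^{p,q}=0$ for $q<t$ then gives $\underset{M}{\grade\fp}\geq t$ at once, and the corner identification $H^{t}_{\fp}(M)\cong\Gamma_{\fp}(H^{t}_{I,J}(M))$, which is nonzero because $\fp$ is an associated (not merely supporting) prime, gives the reverse inequality. Your approach buys uniformity (no induction, no special case at $t=1$) and actually proves the stronger statement $H^{t}_{\fp}(M)\cong\Gamma_{\fp}(H^{t}_{I,J}(M))$; the paper's approach is more elementary in that it avoids a second spectral sequence and relies only on the long exact sequence of $H^{\bullet}_{\fp}$. Both avoid the erroneous support equality of \cite[3.6]{T-T}.
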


\begin{proof}
We use induction on $t$. Let $t = 0$ and $\fp\in \Ass_R
(\Gamma_{I,J}(M))$. Then $\fp =(0:_{R}x)$ for some $x\in
\Gamma_{I,J}(M)$. Hence $x\in\Gamma_{\fp}(M)$ and so
$\Gamma_{\fp}(M)\neq 0$.

 Now suppose that $t>0$ and the case $t - 1$
is settled. Let $\fp \in \Ass(H^{t}_{I,J}(M))$ and consider the
exact sequence $0\rightarrow M\rightarrow E\rightarrow L\rightarrow
0$, where $E=E_{R}(M)$ is the injective envelope of $M$. Therefore,
using \cite[2.2]{T-T}, $H^{i}_{I,J}(L)\cong H^{i+1}_{I,J}(M)$ for
all $i\geq 0$ and we get
$$inf\{i\in \mathbb{N}_{0}: H^{i}_{I,J}(L)\neq 0\}= inf\{i\in
\mathbb{N}_{0}: H^{i}_{I,J}(M)\neq 0 \}-1=t-1$$ and that
$\fp\in\Ass_R(H^{t-1}_{I,J}(L))$ . Thus, by inductive hypothesis,
$\underset{L}{\grade\fp=} t-1$. Now, consider the long exact
sequence
$$H^{i-1}_{\fp}(M)\rightarrow H^{i-1}_{\fp}(E)\rightarrow
H^{i-1}_{\fp}(L)\rightarrow H^{i}_{\fp}(M).$$ If $t>1$, then
$H^{i}_{\fp}(M)\cong H^{i-1}_{\fp}(L)=0$ for all $i<t$ and
$H^{t}_{\fp}(M)\cong H^{t-1}_{\fp}(L)\neq 0$. Thus
$\underset{M}{\grade\fp=} t$.

Let $t=1$. Then $\Gamma_{\fp}(L)\neq 0$. By the above exact
sequence, it is enough to show that $\Gamma_{\fp}(E)=0$. On the
contrary, assume that $\Gamma_{\fp}(E)\neq 0$. Then there exists a
non-zero element $x\in E$ and $n\in \mathbb{N}$ such that
$\fp^{n}x=0$. We may assume that $\fp^{n}x=0$ and $\fp^{n-1}x\neq
0$. So, there exists $r\in \fp^{n-1}$ such that $rx\neq 0$. Thus
$\fp\subseteq (0:_{R}rx)$. On the other hand, by Lemma \ref{supp},
$$\fp\in \Ass_R(H^{1}_{I,J}(M))\subseteq \Supp_R
(H^{1}_{I,J}(M))\subseteq \underset{\fa \in
\widetilde{W}(I,J)}{\bigcup}\Supp_R(H^{1}_{\fa}(M)).$$ So that there
exists $\fa \in \widetilde{W}(I,J)$ such that $\fa\subseteq \fp$.
Let $m\in\mathbb{N}$ with $I^{m}\subseteq \fa +J\subseteq
\fp+J\subseteq (0:_{R}rx)+J$. Hence $rx\in \Gamma_{I,J}(M)$ which
contradicts with hypothesis and the choice of $rx$. Therefore
$\Gamma_{\fp}(E)=0$ and so $\underset{M}{\grade\fp=} 1$.
\end{proof}

% ------------------------------------------------------------------------

%\subsection*{Acknowledgment}

% ------------------------------------------------------------------------

\begin{thebibliography}{1}
\bibitem{AGH-AM}

 M. Aghapournahr, Kh. Ahmadi-Amoli and M. Y. Sadeghi, \textit{The concepts of depth of a pair of ideals $(I,J)$ on modules and
$(I,J)$-Cohen-Macaulay modules}, arXiv:1301.1015v1 [math. AC] 6 Jan
2013.



\bibitem{B-S-SH}
K. Borna Lorestani, P. Sahandi and T. Sharif, \textit{A note on the
Associated Primes of Local Cohomology Modules},  Comm. Alg.,
{\bf34}(9) (2006)   3409-3412.

\bibitem{b}
M. Brodmann, {\em Asymptotic behaviour of cohomology: tameness,
supports and associated primes}, S. Ghorpade, H. Srinivasan, J.
Verma (Eds.), "Commutative Algebra and Algebraic Geometry"
Proceedings, Joint International Meeting of the AMS and the IMS on
Commutative Algebra and Algebraic Geometry, Bangalore/India,
December 17-20, 2003, Contemporary Mathematics Vol {\bf 390} (2005)
31-61.

\bibitem{bl}
 M. P. Brodmann and A. Lashgari Faghani, \textit{A finiteness result for associated primes
of local cohomology modules}, Proc. Amer. Math. Soc., {\bf 128}(10)
(2000) 2851- 2853.


\bibitem{B-SH}
 M. P. Brodmann and R. Y. Sharp, \textit{Local cohomology: An algebraic
introduction with geometric applications}, Cambridge University
Press, 1998.

\bibitem{chu1}
 L. Chu, \textit{Top local cohomology modules with respect to a pair of
ideals}, Proc. Amer. Math. Soc., \textbf{139}  (2011)  777-782.


\bibitem{chu2}
 L. Chu and Q. Wang, \textit{Some results on local cohomology modules
defined by a pair of ideals}, J. Math. Kyoto Univ.,\textbf{ 49}
(2009) , 193-200.

\bibitem{HART}
R. Hartshorne, \textit{Affine duality and cofiniteness}, Invent.
Math., {\bf9} (1970)  145-164.

\bibitem{HU}
C. Huneke, , Free resolutions in commutative algebra and algebraic
geometry, Res. Notes Math. 2, Jones and Bartlett, Boston, MA.,
(1992)  93-108.

\bibitem{ROT}
J. Rotman, \textit{An introduction to Homological Algebra}, Academic
Press, Second Edition, (2009).



\bibitem{sch}
P. Schenzel, \textit{Explicit computations around the
Lichtenbaum–-Hartshorne vanishing theorem}, Manuscripta Math. 78 (1)
(1993) 57–68.

\bibitem{si}
A. Singh, \textit{P-torsion elements in local cohomology modules}.
(English summary) Math. Res. Lett., {\bf7}  (2000)    165-176.

\bibitem{TAK}

 R. Takahashi, Y. Yoshino and T. Yoshizawa, \textit{Local cohomology
based on a nonclosed support defined by a pair of ideals}, J. Pure
Appl. Algebra., {\bf213}  (2009)  582-600.

\bibitem{T-T}


A. Tehranian and A. Pour Eshmanan Talemi, \textit{Cofinitness of
local cohomology based on a non-closed spport defiend by a pair of
ideals}, Bull. Iranian Math. Soc., {\bf36}(2) (2010) 145-155.




\end{thebibliography}
\end{document}